\newtheorem{theorem}{Theorem}[section]
\newtheorem{example}{Example}
\newtheorem{lemma}[theorem]{Lemma}
\newtheorem{remark}[theorem]{Remark}
\newenvironment{proof}[1][Proof]{\textbf{#1.} }{\ \rule{0.5em}{0.5em}}
\title{On the doubly stochastic realization of spectra}
\author{ Kassem Rammal \\
	KALMA, Department of Mathematics,\\  Faculty of Science, Lebanese University,\\ Beirut, Lebanon
	\texttt{kassem.rammal@hotmail.com} \\
	%% examples of more authors
	\And
	Bassam Mourad \\
	KALMA, Department of Mathematics,\\  Faculty of Science, Lebanese University,\\ Beirut, Lebanon
	\texttt{ bmourad@ul.edu.lb} \\
    \And
	Hassan Abbas \\
	KALMA, Department of Mathematics,\\  Faculty of Science, Lebanese University,\\ Beirut, Lebanon
	\texttt{habbas@ul.edu.lb} \\
   \And
	Hassan Issa \\
	KALMA, Department of Mathematics,\\  Faculty of Science, Lebanese University,\\ Beirut, Lebanon
	\texttt{hissa@uni-math.gwdg.de} \\
	%% \AND
	%% Coauthor \\
	%% Affiliation \\
	%% Address \\
	%% \texttt{email} \\
	%% \And
	%% Coauthor \\
	%% Affiliation \\
	%% Address \\
	%% \texttt{email} \\
	%% \And
	%% Coauthor \\
	%% Affiliation \\
	%% Address \\
	%% \texttt{email} \\
}
\begin{document}
\maketitle

\begin{abstract}
An $n$-list $\lambda = \left(r; \lambda_2, \ldots, \lambda_n\right)$ of complex numbers with $r>\max\limits_{2\leq j\leq n}|\lambda_j|,$ is said to be realizable if $\lambda$ is the spectrum of $n\times n$ nonnegative matrix $A$ and in this case $A$ is said to be a nonnegative realization of $\lambda$.  If, in addition,  each row and column sum  of $A$  is equal to $r$, then $\lambda$ is said to be doubly stochastically realizable and in such case $A$ is said to be a doubly stochastic realization for $\lambda$.
In 1997, Guo  proved that if $\left(\lambda_2,\ldots, \lambda_n\right)$ is any list of complex numbers which is closed under complex conjugation then there exists a least real number $\lambda_0$
with $\max\limits_{2\leq j\leq n}|\lambda_j|\leq\lambda_0\leq 2n\max\limits_{2\leq j\leq n}|\lambda_j|$
such that the list of complex numbers $\left( \lambda_1,\lambda_2,\ldots,\lambda_n\right)$ is realizable if and only if $\lambda_1\geq \lambda_0$.
% In 2020, Julio and Soto  showed that the upper bound may be reduced to $(n-1)\max\limits_{2\leq j\leq n}|\lambda_j|$ in the case when at least one of the %$\lambda_i$ is real.
Many researchers deal with sharpening this upper bound   and others  are concerned with finding the exact value of $\lambda_0$ for  particular classes of matrices\cite{enide,chris,soto_20,maria}.

In this paper, we first describe a method for passing from a nonnegative realization to a doubly stochastic realization. As  applications, we give a new sufficient condition for a stochastic matrix $A$ to be cospectral to a doubly stochastic matrix $B$ and in this case $B$ is shown to be the unique closest doubly stochastic matrix to $A$ with respect to the Frobenius norm. Then, our next result
%slightly improve the upper bound for the nonnegative realization  presented by Julio and Soto and in the case when none of the $\lambda_i$ is real,
 gives an improvement of  Guo's bound which also sharpens the existing known bound for the case when one of at least one of  $ \left\{ \lambda_2, \ldots, \lambda_n\right\}$
  is real. Furthermore,  we investigate the case when  $ \left\{ \lambda_2, \ldots, \lambda_n\right\}$  are all non-real which has not been dealt before. Our main results here also  sharpen  Guo's bound. Next, for doubly stochastic realizations, we obtain an upper bound that improves Guo's bound as well. Finally, for certain particular cases, we give a further improvement of our last bound for doubly stochastic realization.
\end{abstract}

% keywords can be removed
\keywords{nonnegative matrices, doubly stochastic matrices, inverse eigenvalue problem, nonnegative realization, doubly stochastic realization}

\section{Introduction} %\label{Ch:1}
An $m\times n$ matrix $A$ with real entries is said to be \emph{nonnegative} if all of its entries are nonnegative. If in addition, each row sum of $A$ is equal to $1,$ then $A$ is called \emph{stochastic} (or \emph{row-stochastic}). %An $n\times n$ matrix is said to be doubly stochastic if it has nonnegative entries and the sum of the entries in every row and every column is equal to one.
Generally, an  $n\times n$  matrix $A$ over the field of the real numbers $\mathbb{R}$ having each row sum equals to a nonnegative number $r \in \mathbb{R}^+,$ is said to be an $r-$\emph{generalized stochastic} matrix (note that $A$ is not necessarily nonnegative). If $A$ and its transpose $A^T$ are $r-$generalized stochastic matrices then $A$ is said to be an $r-$\emph{generalized doubly stochastic} matrix. The set of all $r-$generalized  $n\times n$  doubly stochastic matrices with entries in $\mathbb{R}$ is denoted by $\Omega^r(n).$ An $n\times n$ generalized doubly stochastic matrix is an element
$\Omega(n)$ where
\begin{equation*}
	\Omega(n)=\bigcup_{r\in \mathbb{R}^+}\Omega^r(n).
\end{equation*}
Of special importance are the nonnegative elements in $\Omega(n)$ and in particular the nonnegative elements in $\Omega^1(n)$ which are called the \emph{doubly stochastic} matrices and have been the object of study for a long time see \cite{John_81,Mourad_03,MouradALL_13,Mourad_15,Perferct_65,Soules_83}.

The Perron-Frobenius theorem states that if $A$ is a nonnegative matrix, then it has a nonnegative eigenvalue $r$ (that is the Perron root) which is greater than or equal to the modulus of each of the other eigenvalues (see, e.g., \cite{Minc_88}). To this eigenvalue $r$ of $A$ corresponds a nonnegative eigenvector $x$ which is also referred to as the Perron-Frobenius eigenvector of $A.$ In particular, it is well-known that if $A$ is an $n\times n$ stochastic matrix then its Perron eigenvalue $r = 1$ and its corresponding unit eigenvector is the column vector $x = e = \left(1,\ldots, 1\right)^T \in \mathbb{R}^n.$ Consequently, this is also true for $A$ and $A^T$ when $A$ is doubly stochastic. More generally, for any $X\in \Omega^r(n)$,  $e$ is clearly an eigenvector for both $X$ and $X^T$ corresponding to the eigenvalue $r.$ Therefore, $X \in \Omega^r(n)$ if and only if $Xe= re$ and $e^T X = re^T$ but this in turn is equivalent to $XJ_n = J_nX = rJ_n,$ where $J_n$ is the  $n\times n$  matrix with each of its entries is equal to $\frac{1}{n}.$
%and obviously this is also true when $e_n$ is replaced by $e= \sqrt{n}e_n$ and $J_n$ is replaced by $J= \sqrt{n}J_n$.

Two matrices $A$ and $B$ are said to be \emph{cospectral} if they have the same set of eigenvalues. Throughout this paper, if $A = \left(a_{ij}\right)$ is any square matrix, then the \emph{spectrum} of $A$ is denoted by $\sigma(A)$ and when $A$ is real, then $s_j(A)$ and $a_j$ denote the \emph{sum} and the \emph{smallest entry} of the $j$th column of $A$ respectively. For any complex number $z$, its \emph{real part} is denoted by $\Re(z)$ and its \emph{imaginary part} is denoted by $\Im(z)$. Moreover, for any real number $a,$ its \emph{absolute value} will be denoted by $|a|,$ and the  $n\times n$  \emph{identity} matrix will be denoted by $I_n.$ Next, we shall borrow the following notation which first appeared in \cite{Fiedler_74}. To indicate that the $n-$list $\lambda= (\lambda_1, \ldots , \lambda_n)$ is the spectrum of an  $n\times n$  nonnegative matrix $A$ with Perron eigenvalue $\lambda_1,$ we will write the first component with a semi-column as $\left(\lambda_1; \lambda_2,\ldots , \lambda_n\right)$ and say that $\lambda$  is realized by $A$ and that $A$ is a \emph{nonnegative realization} of $\lambda$.  In addition, the  $n-$list $\lambda= (\lambda_1;\lambda_2, \ldots , \lambda_n)$ is said to be  \emph{doubly stochastically realizable} if $\lambda$ is realized by a nonnegative element $A$ in $\Omega(n)$, and we shall say that $A$ is a \emph{doubly stochastic realization} of $\lambda$.

Recall that the inverse eigenvalue problem for special kind of matrices is concerned with constructing a matrix that maintains the required structure from its set of eigenvalues (see, e.g., \cite{Golub_05}).
The \emph{nonnegative inverse eigenvalue problem }(hereafter, the NIEP) can be stated as the problem of finding necessary and sufficient conditions for an $n-$tuples $\left(\lambda_1; \lambda_2, \ldots , \lambda_n\right)$ (where $\lambda_2, \ldots, \lambda_n$ might be complex) to be the spectrum of an $n\times n$ nonnegative matrix $A$ see \cite{Borobia_07,JLL_96,LL_78,Minc_88,Perfect_53,Perfect_55,Suleimanova_49} and the references therein. Similarly, the \emph{stochastic inverse eigenvalue problem }(SIEP) asks which sets of $n$ complex numbers can occur as the spectrum of an $n\times n$ stochastic matrix $A.$ In addition, the \emph{doubly stochastic inverse eigenvalue problem }denoted by (DIEP), is the problem of determining the necessary and sufficient conditions for a complex $n-$tuples to be the spectrum of an  $n\times n$  doubly stochastic matrix. Now the nonnegative $r-$generalized stochastic (resp. doubly stochastic) inverse eigenvalue problem can be defined analogously. However, for $r > 0$ it is obvious that this last problem is equivalent to that of (SIEP) (resp. DIEP) since $\left(r; \lambda_2, \ldots, \lambda_n\right)$ is realized by an $n\times n$ nonnegative $r-$generalized stochastic (resp. doubly stochastic) matrix if and only if $\dfrac{1}{r}\left(r; \lambda_2, \ldots, \lambda_n\right)$ is realized by an  $n\times n$  stochastic (resp. doubly stochastic) matrix.

It is well-known (see \cite{John_81}) that (NIEP) is equivalent to (SIEP). More precisely, if  $\left(\lambda_1; \lambda_2, \ldots, \lambda_n\right)$ is the spectrum of an  $n\times n$  nonnegative matrix $A,$ then $\left(\lambda_1; \lambda_2, \ldots, \lambda_n\right)$ is also the spectrum of a nonnegative $\lambda_1-$generalized stochastic matrix.
 On the other hand, (SIEP) and (DIEP) are known not to be equivalent (see \cite{John_81}) so the problem of characterizing which  stochastic matrices are similar (or cospectral) to doubly stochastic matrices is of interest as it sheds light on how these two problems differ.

In \cite{Guo_97}, Guo  proved that if $\left(\lambda_2,\ldots, \lambda_n\right)$ is any list of complex numbers which is closed under complex conjugation then there exists a least real number $\lambda_0$
with $\max\limits_{2\leq j\leq n}|\lambda_j|\leq\lambda_0\leq 2n\max\limits_{2\leq j\leq n}|\lambda_j|$
such that the list of complex numbers $\left( \lambda_1;\lambda_2,\ldots,\lambda_n\right)$ is realizable if and only if $\lambda_1\geq \lambda_0$. It is worthy to observe that by the Perron-Frobenius theorem, the lower bound is sharp. However, in \cite{soto_20},
 the authors  showed that the upper bound may be reduced to $(n-1)\max\limits_{2\leq j\leq n}|\lambda_j|$ in the case when at least one of the $\lambda_i$ is real. Furthermore,
  in \cite{enide,chris,maria} the exact value of $\lambda_0$ is found for  particular classes of matrices.

In this paper, we first describe a method for passing from a nonnegative realization to a doubly stochastic realization. As  applications, we give a new sufficient condition for a stochastic matrix $A$ to be cospectral to a doubly stochastic matrix $B$ and in this case $B$ is shown to be the unique closest doubly stochastic matrix to $A$ with respect to the Frobenius norm. Then, our next result deals with sharpening the upper bound for the nonnegative realization. A particular case of this last result  slightly improves the upper bound for the nonnegative realization  presented in \cite{soto_20}. For the case when none of the $\lambda_i$ is real little is known. So, our next next result is concerned with improving  Guo's upper bound for this case. Next, for doubly stochastic realization, we obtain an upper bound that improves Guo's bound as well. Finally, for certain particular cases, we give a further improvement of our last bound for doubly stochastic realization.

The main tool that we rely heavily on here  is  known as Brauer's theorem. Although, this theorem has been repeatedly used in the study of the NIEP, but the merit of using it here mainly lies in the applications where we are able to sharpen the upper bounds for the nonnegative and doubly stochastic  realizations.
Recently, in \cite{soto_20} the authors showed an important role for Brauer's theorem in the solvability of the (NIEP) and certainly our work here gives further support to this line of direction and  adds to this role another feature that can be manifested in the doubly stochastic realization problem. Brauer's theorem can be stated as follows.

\begin{theorem}
Let $A$ be any $n\times n$ matrix with eigenvalues $\lambda_1,\dots, \lambda_n.$ Let $v= (v_1,\ldots,v_n)^T$ be an eigenvectors of $A$ corresponding to the eigenvalue $\lambda_k$ and  let $z$ be any $n$-dimensional vector. Then the matrix $A + vz^T$ has eigenvalues
$\lambda_1,\dots, \lambda_{k-1}, \lambda_{k}+v^Tz,\lambda_{k+1},\ldots,\lambda_n$.
\end{theorem}

\section{Main Observations}
 We shall start with the following notation which is needed for our purposes and that will be used throughout this section. Let $A=\left(a_{ij}\right)$ be a \emph{real} $n\times n$ matrix, then for each $i=1,\ldots,n$, we  shall define the following quantity
$$w_j(A)= \left\{\begin{array}{c} \min\limits_{1\leq i\leq n} a_{ij}  \ \ \ \ \ \ \ \ \ \   \mbox{ if } \  a_{ij}< 0 \  \  \mbox{ for some } i=1,...,n,\ \ \  \ \\
\\
   0  \hspace{3.5 cm}\mbox{ otherwise.} \ \ \ \ \ \ \ \ \ \ \ \ \ 
\end{array}\right . $$ In addition, we shall define
by \begin{equation} w(A)= \min\{w_j(A), \ j=1,\ldots,n \}.\end{equation}

With these notation, we have  the following two elementary lemmas for which their proofs rely on Brauer's theorem.

\begin{lemma}
Let $\left(r, \lambda_2, \ldots, \lambda_n\right)$ be the spectrum of an $r-$generalized  $n\times n$  stochastic matrix $A = \left(a_{ij}\right).$ Then,  $\left(r + \sum\limits_{i=1}^{n}|w_i(A)|; \lambda_2, \ldots, \lambda_n\right)$ is the spectrum of an $n\times n$ nonnegative $\left(r + \sum\limits_{i=1}^{n}|w_i(A)|\right)$-generalized stochastic matrix $S.$
%In particular, there exists $k \ge 0$ such that the $n-$list $\left(r + \varepsilon, \lambda_2, \ldots, \lambda_n\right)$ is the spectrum of a nonnegative $\left(r + \varepsilon\right)-$generalized stochastic (resp. doubly stochastic)  $n\times n$  matrix $S$ for all $\varepsilon\ge k.$
\end{lemma}
\begin{proof}
Since $e$ is an eigenvector for $A$ corresponding to the eigenvalue $r,$ then making use of Brauer's theorem with $z=\left(|w_1(A)|,\ldots,|w_n(A)|\right)^T$ gives
  an $n\times n$ generalized stochastic matrix $S = A+ez^T$ whose  spectrum is  $\sigma(S) = \left(r +e^Tz; \lambda_2,\ldots, \lambda_n\right),$ and obviously from the definition of the $w_i(A)$,  $S$ is nonnegative.
\end{proof}

\begin{lemma}
Let $\left(r, \lambda_2, \ldots, \lambda_n\right)$ be the spectrum of an $r-$generalized  $n\times n$  doubly stochastic matrix $A = \left(a_{ij}\right).$ Then, $\left(r + n|w(A)|; \lambda_2, \ldots, \lambda_n\right)$ is the spectrum of an $n\times n$ nonnegative $\left(r+n|w(A)| \right)$-generalized doubly stochastic matrix $S.$
%In particular, there exists $k \ge 0$ such that the $n-$list $\left(r + \varepsilon, \lambda_2, \ldots, \lambda_n\right)$ is the spectrum of a nonnegative $\left(r + \varepsilon\right)-$generalized stochastic (resp. doubly stochastic)  $n\times n$  matrix $S$ for all $\varepsilon\ge k.$
\end{lemma}
\begin{proof}
As $e$ is an eigenvector for $A$ corresponding to the eigenvalue $r,$ then making use of Brauer's theorem this time  with $z=\left(|w(A)|,\ldots,|w(A)|\right)^T$ yields
  an $n\times n$ generalized doubly stochastic matrix $S = A+ez^T$ that has the desired spectrum  $\sigma(S) = \left(r +e^Tz; \lambda_2,\ldots, \lambda_n\right),$ and obviously from the definition of  $w(A)$, we conclude that $S$ is nonnegative.
\end{proof}

Now, for nonnegative matrices, we have the following.
\begin{theorem}\label{th:r+epsi}
Let $\sigma(A) = (r; \lambda_2, \ldots, \lambda_n)$ be the spectrum of an $n\times n$ nonnegative matrix $A = \left(a_{ij}\right).$ Then there exists a real $k_A \ge -r$ such that $(r + \varepsilon;\lambda_2, \ldots, \lambda_n)$ is the spectrum of a nonnegative $(r + \varepsilon)-$generalized $n \times n$ doubly stochastic matrix $D,$ for all $\varepsilon \ge k_A.$
\end{theorem}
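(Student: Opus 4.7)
The plan is to reduce first to the case where $A$ is itself nonnegative $r$-generalized stochastic, then manufacture an $r$-generalized doubly stochastic (but possibly signed) matrix $B$ cospectral to $A$, and finally shift $B$ by a scalar multiple of $J_n$ to obtain a nonnegative $(r+\epsilon)$-generalized doubly stochastic matrix for all sufficiently large $\epsilon$. The reduction costs nothing: the equivalence of the nonnegative and stochastic inverse eigenvalue problems recalled in the introduction lets me replace $A$ by a nonnegative $r$-generalized stochastic matrix with the same spectrum, so I may assume $A e_n = r e_n$ from the outset.

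For the main construction I would set
\[
B = (I_n - J_n)\,A\,(I_n - J_n) + r\,J_n,
\]
a formula patterned on the nearest-doubly-stochastic projection appearing at the end of Section~1. Since $J_n = e_n e_n^T$ and $(I_n - J_n) e_n = 0$, one immediately checks $B e_n = r e_n$ and $e_n^T B = r e_n^T$, so $B \in \Omega^r(n)$. To confirm $\sigma(B) = \sigma(A)$, I would expand $B = A - J_n A - A J_n + J_n A J_n + r J_n$ and use $A e_n = r e_n$ (which forces $A J_n = r J_n$ and $J_n A J_n = r J_n$) to collapse the expression to $B = A + e_n y^T$ with $y^T = r e_n^T - e_n^T A$. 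A single application of Rado's theorem (Theorem~\ref{th:1}) with $X = e_n$ and $C = y^T$ then gives $\Lambda + C X = r + y^T e_n = r$, since $y^T e_n = r e_n^T e_n - e_n^T A e_n = r - r = 0$. Hence $B$ shares the full spectrum $(r;\lambda_2,\dots,\lambda_n)$ of $A$.

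Finally, for each real $\epsilon$ define $D_\epsilon = B + \epsilon J_n$. Computing row and column sums gives $D_\epsilon \in \Omega^{r+\epsilon}(n)$, and a second application of Theorem~\ref{th:1} with $X = e_n$ and $C = \epsilon e_n^T$ yields $\sigma(D_\epsilon) = (r+\epsilon;\lambda_2,\dots,\lambda_n)$. Setting $k_A = -n \min_{i,j} B_{ij}$, the entrywise identity $(D_\epsilon)_{ij} = B_{ij} + \epsilon/n$ shows that $D_\epsilon$ is nonnegative if and only if $\epsilon \geq k_A$; the bound $k_A \geq -r$ is forced by $B \in \Omega^r(n)$, since each row of $B$ sums to $r$ and therefore $n \min_{i,j} B_{ij} \leq r$. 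The main obstacle in this plan is convincing oneself that the sandwiched expression $(I_n - J_n)\,A\,(I_n - J_n)$, which looks as if it should lose rank, is in fact spectrum-preserving after the correction $+\,r J_n$; the Rado identity above is the technical point that makes this immediate and sidesteps a clumsier change-of-basis argument.
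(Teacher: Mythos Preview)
Your proof is correct and rests on the same core mechanism as the paper's: reduce to the $r$-generalized stochastic case, perturb $A$ by a rank-one term $e_n y^T$ so that the result lands in $\Omega^r(n)$ while Rado's theorem keeps the spectrum intact, and then shift by multiples of $J_n$. The difference is purely in execution. The paper leaves $y$ free, writes down the linear system forcing equal row and column sums, and solves it parametrically (their system~(2)), obtaining a one-parameter family of generalized doubly stochastic matrices; the nonnegativity constraints (their system~(3)) then pin down $k_A$. You bypass the linear algebra by recognising from the outset that the Khoury projection $B=(I_n-J_n)A(I_n-J_n)+rJ_n$ already produces the distinguished point on that line with row and column sums exactly $r$, and your computation $B=A+e_n(re_n^T-e_n^TA)$ is precisely the paper's solution~(2) specialised to $y_m=(r-x_m)/n$. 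Your route is shorter and arguably more conceptual; the paper's explicit system~(3), on the other hand, is not wasted effort, since it is reused verbatim in the proof of the subsequent Theorem~3.1 to extract the sufficient conditions $x_j\le 1+na_j$ for a stochastic matrix to be cospectral to a doubly stochastic one.
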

\begin{proof}
 For simplicity, we let $s_j(A)$ be denoted by $s_j$ for all $j = 1, 2,\ldots, n$ and without loss of generality, we can assume that $A$ is nonnegative $r-$generalized stochastic matrix. Then for any vector $z = (z_1,\ldots,z_n)^T\in \mathbb{R}^n,$ Brauer's theorem tells us that the spectrum $\sigma(B)$ of the matrix $B=A+e z^T$ is clearly equal to
\begin{equation*}
\sigma(B)=\left( r+\sum_{j=1}^{n}z_j;\lambda_2,\ldots,\lambda_n\right).
\end{equation*}
Now the basic idea is to study the conditions for which $B$ is a nonnegative generalized doubly stochastic matrix. Clearly, the matrix $B$ is given by:
\begin{equation*}
B=\begin{pmatrix}
a_{11}&\cdots&a_{1n-1}&r-\sum_{j=1}^{n-1}a_{1j}\\
\vdots&&\vdots&\vdots\\
a_{n1}&\cdots&a_{nn-1}&r-\sum_{j=1}^{n-1}a_{nj}
\end{pmatrix} +\begin{pmatrix}
	z_1&\cdots&z_n\\
	\vdots&&\vdots\\
	z_1&\cdots&z_n
\end{pmatrix}.
\end{equation*}
Also observe that each row sum of $B$ is equal to $r+\sum_{j=1}^{n}z_j,$ and for all $j = 1, 2,\ldots,n,$ the sum of the $j$th column of $B$ is $s_j + nz_j.$ By equating each $j$th column sum of $B$ to the $j$th row sum which is $r+\sum_{j=1}^{n}z_j,$ we obtain the system of $n$ linear equations in the $n$ unknowns $z_1,\ldots, z_n$ given by:
\begin{equation}\label{syst:1}
\left\{\begin{aligned}
nz_1+s_1&=r+\sum_{j=1}^{n}z_j\\
  &\ \ \! \vdots \\
nz_j+s_j&=r+\sum_{j=1}^{n}z_j\\
  &\ \ \! \vdots \\
nz_n+s_n&=r+\sum_{j=1}^{n}z_j.
\end{aligned}
\right.
\end{equation}
Since $\sum_{j=1}^{n}s_j=nr,$ then the sum of any $(n - 1)$ equations of $\left(\ref{syst:1}\right)$ is equal to the remaining equation and the system is in fact of $(n -1)$ equations in $n$ unknowns and has an infinite number of solutions (which is obviously a line solution). Now, if we let $s_m = \max\left(s_j\right),$  and we let $z_m$ be the only parameter for the system and then clearly $\sum_{j=1}^{n}z_j=nz_m+s_m-r$.  Thus,  its solution is easily given by:
\begin{equation}\label{syst:2}
\left\{\begin{aligned}
	z_1&=z_m+\dfrac{s_m-s_1}{n}\\
z_2&=z_m+\dfrac{s_m-s_2}{n}\\
	&\ \ \!\vdots\\
z_n&=z_m+\dfrac{s_m-s_n}{n}.
\end{aligned}
\right.
\end{equation}
So that the $(i,j)-$entry of $B =\left(b_{ij}\right)$ is clearly given by:
$$
b_{ij}=a_{ij}+ z_m+\dfrac{s_m-s_j}{n}
$$
As $a_j$ is the smallest entry of the $j$th column of $A,$ then obviously the conditions for which $B$ is nonnegative are given by:
\begin{equation}\label{Isyst:3}
\left\{\begin{aligned}
a_1+z_m+\dfrac{s_m-s_1}{n}&\ge 0\\
&\ \ \!\vdots\\
a_{m-1}+z_m+\dfrac{s_m-s_{m-1}}{n}&\ge 0\\
a_m+z_m&\ge 0\\
a_{m+1}+z_m+\dfrac{s_m-s_{m+1}}{n}&\ge 0\\
&\ \ \!\vdots\\
a_n+z_m+\dfrac{s_m-s_n}{n}&\ge 0.
\end{aligned}
\right.
\end{equation}
It is easy to see that system $\left(\ref{Isyst:3}\right)$ has an infinite number of solutions, and if $c= \min\limits_{1\leq j\leq n}\{a_j+\dfrac{s_m-s_j}{n}\}$  then
$z_m=-c$ \ is the smallest value for which $\left(\ref{Isyst:3}\right)$ is satisfied. Now as $s_m\geq r$, then we can write  $s_m-nc=r+k_A$ where
$k_A=s_m-nc-r\le r+k_A$. Also,  since $s_m-nc\geq 0$, then clearly
   $k_A\ge -r.$ Now for any $\varepsilon\ge k_A$ there exists $\alpha\ge 0$ such that $\varepsilon = k_A + \alpha$ and then $(r + \varepsilon; \lambda_2,\ldots, \lambda_n)$ is obviously realized by $D=B + \alpha J_n$ and the proof is complete.
\end{proof}

\begin{example}
Let $A$ be the stochastic matrix defined by $$A=\begin{pmatrix}
	\frac{1}{3}&\frac{1}{3}&\frac{1}{3}\\[10pt]
	\frac{1}{4}&\frac{1}{4}&\frac{1}{2}\\[10pt]
	\frac{1}{6}&\frac{1}{6}&\frac{2}{3}
\end{pmatrix}$$ with $\sigma(A)=\left(1;0,\frac{1}{4}\right).$
Then $s_1=s_2=\frac{3}{4}, s_3=\frac{3}{2}$ and therefore
according to the proof of the preceding theorem, $s_m=s_3$ and
\begin{equation*}
\begin{aligned}
C&=\begin{pmatrix}
	\frac{1}{3}&\frac{1}{3}&\frac{1}{3}\\[10pt]
	\frac{1}{4}&\frac{1}{4}&\frac{1}{2}\\[10pt]
	\frac{1}{6}&\frac{1}{6}&\frac{2}{3}
\end{pmatrix}+\begin{pmatrix}
\frac{1}{4}& \frac{1}{4}&0\\[10pt]
 \frac{1}{4}&\frac{1}{4}&0\\[10pt]
\frac{1}{4}&\frac{1}{4}&0
\end{pmatrix}\\[16pt]
&=\begin{pmatrix}
\frac{7}{12}&\frac{7}{12}&\frac{1}{3}\\[10pt]
\frac{1}{2}&\frac{1}{2}&\frac{1}{2}\\[10pt]
\frac{5}{12}&\frac{5}{12}&\frac{2}{3}
\end{pmatrix}.
\end{aligned}
\end{equation*}
Clearly the smallest entry of $C$ is $c= \dfrac{1}{3}$ and therefore we obtain the optimal doubly stochastic realization for $\sigma(A)$ given by
 $$B =\begin{pmatrix}
\dfrac{1}{4}&\dfrac{1}{4}&0\\[10pt]
\dfrac{1}{6}&\dfrac{1}{6}&\dfrac{1}{6}\\[10pt]
\dfrac{1}{12}&\dfrac{1}{12}&\dfrac{1}{3}
\end{pmatrix}$$ whose spectrum is $\sigma(B) = \left(\dfrac{1}{2}; 0, \dfrac{1}{4}\right).$
Finally, note that $B+\dfrac{1}{2}J_n$ is doubly stochastic with spectrum $(1; 0, \frac{1}{4}).$
\end{example}

\begin{remark}
\begin{itemize} %\item  An easy inspection shows that Algorithm 2 will certainly lead to the same same solution.
\item It should be stressed that the boundary $k_A=-r$ can be achieved by the matrix $A$ whose each entry in the first column is $r$ and all the remaining entries are zeroes and in this case, $A$ is cospectral to the nonnegative $r-$generalized doubly stochastic matrix $rJ_n.$
   \item  The proof of the preceding theorem gives rise to an algorithm that describes how to go from nonnegative realization to ` doubly stochastic realization.
\item One of the obvious applications of the preceding theorem lies in the fact that any known sufficient conditions for the resolution of (NIEP) will eventually lead to some sufficient conditions for the resolution of (DIEP).

\end{itemize}
\end{remark}

\subsection{Applications}
 In order to present another application, we shall start this section by recalling some auxiliary results from \cite{Khoury_98} for which we need to introduce some more relevant notation. Let $V_{n-1} = I_{n-1}- \left(1+\dfrac{1}{\sqrt{n}}\right)J_{n-1}$ and define the block matrix $U_n =\begin{pmatrix}
\dfrac{1}{\sqrt{n}}&\dfrac{\sqrt{n-1}}{\sqrt{n}}e^T_{n-1}\\[16pt]
\dfrac{\sqrt{n-1}}{\sqrt{n}}e_{n-1}&V_{n-1}
\end{pmatrix}$ where $ e_n= \frac{1}{\sqrt{n}} e.$  Then the first result can be stated as follows.
\begin{lemma}\label{lem:Decomp_D.S}
For any matrix $A \in\Omega^1(n),$ there exists an $(n-1)\times(n-1)$ matrix $X$ such that $A = U_n\left(1\oplus X\right)U_n$ and conversely, for any $(n-1) \times (n-1)$ real matrix $X,$ it holds that $U_n\left(1\oplus X\right)U_n \in\Omega^1(n).$
\end{lemma}
\begin{remark}
The preceding lemma is also valid if $U_n$ is replaced by any real orthogonal matrix $V$ whose first column is $e_n$ and in this case $A = V \left(1 \oplus X\right)V^T$ (see {\cite[Theorem~1]{Sinkhorn_81}}).
\end{remark}
The second one is the following theorem.
\begin{theorem}[see~{\cite[p. 564]{Khoury_98}}]\label{th:Nearest_D.S}
Let $A$ be an  $n\times n$  real matrix. Then
\begin{equation*}
B^* = \left(I_n - J_n\right)A\left(I_n - J_n\right) + J_n
\end{equation*}
is the unique closest matrix to $A$ in $\Omega^1(n)$ with respect to the Frobenius norm.
\end{theorem}

Recall that in \cite{John_81}, the author obtained some sufficient conditions for a stochastic matrix to be similar to a doubly stochastic matrix. In connection with this, our next result is concerned with
% that Theorem~\ref{th:r+epsi} can be simply used to
 obtaining sufficient conditions for a stochastic matrix to be cospectral to a doubly stochastic matrix.

\begin{theorem}
Let $A = \left(a_{ij}\right)$ be an  $n\times n$  stochastic matrix with spectrum $(1; \lambda_2,\ldots, \lambda_n).$ Let $a_j$ and $s_j$ be the smallest entry and the sum of the $j$th column of $A$ respectively. If
\begin{equation}
\left\{\begin{aligned}\label{Isyst:4}
s_1&\le 1+na_1\\
s_2&\le 1+na_2\\
&\ \ \! \vdots\\
s_n&\le 1+na_n
\end{aligned}\right.
\end{equation}
then there exists an  $n\times n$  doubly stochastic matrix $B$ with spectrum $\sigma(B)=(1;\lambda_2,\ldots, \lambda_n).$ Moreover, $B$ is the unique closest doubly stochastic matrix to $A$ with respect to the Frobenius norm and with the property that $B$ is cospectral to $A.$
\end{theorem}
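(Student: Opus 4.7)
The plan is to construct the required $B$ by a specific instance of the Rado--Perfect perturbation used in the proof of Theorem 2.3, and then to recognize that this $B$ coincides with the projection matrix $B^{\ast}$ of Theorem 1.3; this second observation will deliver both the existence of a cospectral doubly stochastic matrix and the uniqueness/closeness assertion simultaneously.

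For existence, I would set $y_j = (1 - x_j)/n$ for $j=1,\dots,n$ and define $b_{ij} = a_{ij} + y_j$, i.e.\ $B = A + e_n y^T$ in the notation of the proof of Theorem 2.3 (so that $y^T = \sqrt{n}(y_1,\dots,y_n)$ and the column vector $e_n$ is normalized). Since $A$ is stochastic, $e_n$ is an eigenvector of $A$ for the eigenvalue $1$, and Theorem 2.1 yields $\sigma(B) = (1 + \sum_j y_j,\lambda_2,\dots,\lambda_n)$. Because $\sum_j x_j = n$, the chosen $y$ satisfies $\sum_j y_j = 0$, so the spectrum of $B$ is exactly $(1,\lambda_2,\dots,\lambda_n)$; the row sums of $B$ remain equal to $1$ while the $j$-th column sum becomes $x_j + n y_j = 1$, and $B$ is therefore generalized doubly stochastic with the correct spectrum. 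Nonnegativity of $B$ reduces column by column to $a_j + (1 - x_j)/n \geq 0$, which is exactly hypothesis (4); hence $B$ is a doubly stochastic matrix cospectral to $A$.

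For the uniqueness and closeness assertion, the crucial step is the identity $B = B^{\ast}$ where $B^{\ast} = (I_n - J_n) A (I_n - J_n) + J_n$. Using $A J_n = J_n$ (stochasticity of $A$) and $J_n A J_n = J_n$, direct expansion gives $B^{\ast} = A - J_n A + J_n$, whose $(i,j)$-entry is $a_{ij} - x_j/n + 1/n = a_{ij} + (1 - x_j)/n = b_{ij}$. Theorem 1.3 states that $B^{\ast}$ is the unique closest matrix to $A$ in all of $\Omega^{1}(n)$; since the set of doubly stochastic matrices cospectral to $A$ is a subset of $\Omega^{1}(n)$ that contains $B^{\ast}$ (by the existence argument above), the matrix $B^{\ast} = B$ is a fortiori the unique closest element in this smaller set.

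No step should present a genuine obstacle: the only substantive content is the identification $B = B^{\ast}$, which requires writing out the $(i,j)$-entry of the projection formula and noting that it matches the Rado correction. In essence, the theorem asserts that under hypothesis (4) the Frobenius projection of $A$ onto $\Omega^{1}(n)$ already lands in the doubly stochastic cone while preserving the spectrum of $A$.
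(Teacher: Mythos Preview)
Your proposal is correct and follows essentially the same route as the paper: you specialize the Rado--Perfect correction of Theorem~2.3 by taking $y_j=(1-x_j)/n$ (which is exactly what the paper's choice $y_m=(1-x_m)/n$ in system~(3) yields for every $j$), verify nonnegativity via hypothesis~(4), and then identify the resulting $B$ with $B^{\ast}=A-J_nA+J_n$ to invoke Theorem~1.3. The only cosmetic difference is that you write down the full vector $y$ directly rather than routing through the parametrized system~(3).
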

\begin{proof}
 As $A$ is stochastic then clearly  $s_1 +s_2 +\cdots+s_n=n.$  Now observing that once (5) is valid then
$z_m =\dfrac{1}{n}-\dfrac{s_m}{n}$ is one of the solutions of  system $\left(\ref{Isyst:3}\right).$ Also, it is not hard to check that in this case, the matrix $B$ in the proof of Theorem~\ref{th:r+epsi} is doubly stochastic and has the required spectrum. For the second part, Theorem~\ref{th:Nearest_D.S} tells us that it suffices to prove that $B=\left(I_n - J_n\right)A\left(I_n - J_n\right) + J_n.$ Since for a stochastic matrix $A,$ we have $AJ_n = J_n$ then $\left(I_n-J_n\right)A\left(I_n-J_n\right)+J_n=A-J_nA+J_n.$ Moreover, each $(i,j)-$entry of the matrix $J_nA$ is clearly equals to $\dfrac{s_j(A)}{n}.$ With this in mind, a simple check now shows that all entries of the two matrices $B$ and $A - J_nA + J_n$ are the same and the proof is complete.
\end{proof}
\begin{example}
Consider the following stochastic matrix $$A=\begin{pmatrix}
\dfrac{2}{3}&\dfrac{1}{3}&0\\[10pt]
\dfrac{1}{3}&\dfrac{2}{3}&0\\[10pt]
\dfrac{1}{2}&\dfrac{1}{2}&0
\end{pmatrix}$$ which has spectrum $\sigma(A)=\left(1,\dfrac{1}{3},0\right).$ Clearly $A$ satisfies the conditions of the preceding theorem and from the above argument, $A$ is cospectral to the matrix $$B=\begin{pmatrix}
\dfrac{1}{2}&\dfrac{1}{6}&\dfrac{1}{3}\\[10pt]
\dfrac{1}{6}&\dfrac{1}{2}&\dfrac{1}{3}\\[10pt]
\dfrac{1}{3}&\dfrac{1}{3}&\dfrac{1}{3}
\end{pmatrix}$$ which is the unique doubly stochastic matrix that is closest to $A$ with respect to the Frobenius norm.
\end{example}
%It is worth mentioning that by choosing a particular $z_m$ in system $\left(\ref{Isyst:3}\right)$ results in a nonnegative generalized
%doubly stochastic matrix $B$ whose row and column sum does not depend on the entries of $A.$ As illustration, we have the following theorem
More general results are given in the following.
\begin{theorem}
Let $A = \left(a_{ij}\right)$ be an $n\times n$ nonnegative $r-$generalized stochastic matrix with spectrum $\left(r; \lambda_2,\ldots, \lambda_n\right)$ and  smallest entry $a$ and whose $i$-th column sum is denoted by $s_i$. In addition, let $a_i$ be the smallest entry of the $i$-th column and $s_m$ be the maximum column sum of $A$.  Then, the followings are true.
\begin{enumerate}
\item  There exists an  $n\times n$  nonnegative generalized doubly stochastic matrix $B$ with spectrum
$$\sigma(B)=\left(nr-na; \lambda_2,\ldots, \lambda_n\right).$$
\item   If $s_i\leq r+na_i$ for all $i=1,\cdots,n,$ then there exists an  $n\times n$  nonnegative generalized doubly stochastic matrix $C$ with spectrum $\sigma(C)=\left(r; \lambda_2,\ldots, \lambda_n\right).$
\item  If $s_m-s_i\geq n(a_m-a_i)$ for all $i=1,\cdots,n,$ then there exists an  $n\times n$  nonnegative generalized doubly stochastic matrix $D$ with spectrum $\sigma(D)=\left(s_m-na_m; \lambda_2,\ldots, \lambda_n\right).$
\end{enumerate}
\end{theorem}
\begin{proof}
\begin{enumerate}
\item In System $(\ref{Isyst:3}),$ let $z_m =r-\dfrac{s_m}{n}-a.$ Since each entry of $A$ is less than or equal $r$ then $s_m \le nr$. Therefore, with this choice of $z_m,$  System $\left(\ref{Isyst:3}\right)$ is not violated and as a conclusion, we obtain a nonnegative generalized doubly stochastic matrix $B$ whose each row and column sum equals to $nr-na,$ and whose spectrum is $\sigma(B)=\left(nr-na; \lambda_2,\ldots, \lambda_n\right).$
	\item Choosing $z_m =\dfrac{r}{n}-\dfrac{s_m}{n},$ and noticing that the hypothesis taken in this case insures that System $\left(\ref{Isyst:3}\right)$ is satisfied. Thus,  there exists an  $n\times n$  nonnegative generalized doubly stochastic matrix $C$ with spectrum $\sigma(C)=\left(r; \lambda_2,\ldots, \lambda_n\right).$
\item
It suffices to take  $z_m =-a_m,$  and then note that with the hypothesis assumed in this case,  it is easy to check that  System $\left(\ref{Isyst:3}\right)$ is satisfied and therefore, we obtain a nonnegative generalized doubly stochastic matrix $D$ whose each row and column sum equals to $\left(s_m-na_m\right),$ and whose spectrum is $\sigma(D)=\left(s_m-na_m; \lambda_2,\ldots, \lambda_n\right).$
\end{enumerate} \end{proof}
\section{Improving the upper bound for nonnegative realizations}
In this section, we will start with an $(n-1)$-list $\left(\lambda_2,\ldots, \lambda_n\right)$  of complex numbers which is closed under complex conjugation and study the problem of finding the optimal $\lambda_1$ for a particular method described here such that $\left(\lambda_1;\lambda_2,\ldots, \lambda_n\right)$ has an $n\times n$ nonnegative realization.

Recall that in \cite{Guo_97}, Guo proved the following theorem which deals with nonnegative realization.
  \begin{theorem}[{\cite[Theorem~2.1]{Guo_97}}] If $\left(\lambda_2,\ldots, \lambda_n\right)$ is any list of complex numbers which is closed under complex conjugation then there exists a least real number $\lambda_0$
with $\max\limits_{2\leq j\leq n}|\lambda_j|\leq\lambda_0\leq 2n\max\limits_{2\leq j\leq n}|\lambda_j|$
such that the list of complex numbers $\left( \lambda_1;\lambda_2,\ldots,\lambda_n\right)$ is realizable if and only if $\lambda_1\geq \lambda_0$.
\end{theorem}
%The Guo's index $\lambda_0$ is the minimum $\lambda_0$ such that $\{ \lambda,\lambda_2,\ldots,\lambda_n\}$ is realizable. The problem of finding $\lambda_0$ is not solved.
 In \cite{soto_20} the authors showed that the upper bound in the preceding theorem may be reduced to $(n-1)\max\limits_{2\leq j\leq n}|\lambda_j|$
  in the case when at least one of them is real. Indeed, the authors proved that  this upper bound  is sharp by exhibiting the $(n-1)$-list $(-1,\ldots,-1)$. Despite this, we shall present a sharper bound which shows that an improvement is possible in a certain direction. Before doing this,  we shall first recall that
their proof  relies on  setting $m=\max\limits_{2\leq j\leq n}|\lambda_j|$ and  defining $\mu_i=\frac{\lambda_i}{m(n-1)}$, $i=2,\ldots,n$. Now, if
$\mu_2,\ldots,\mu_p$ are real and if $x_i=\Re(\mu_i)$,  $y_i=\Im(\mu_i)$ for $i=p+1,\ldots,\frac{n+p}{2}$, then
  they consider the $n\times n$ matrix $B$ (whose each row sum is zero)  defined by
\begin{equation}
		B=\begin{pmatrix}
			0& 0 & 0 & \ldots & \ldots &\ldots &\ldots &\ldots & 0\\
			-\mu_2&\mu_2 &\ddots & & & & & & \vdots\\
            \vdots& \ddots & \ddots & \ddots &  &  &  &  & \vdots\\
			-\mu_p& 0 & \cdots & \mu_p & 0 & & & & \vdots\\
			-x_s& y_s & \ddots & 0 &x_s &-y_s & & & \vdots\\
           -x_s& -y_s &0 & \ddots &y_s &x_s &\ddots & & \vdots\\
			\vdots&\vdots &\vdots &\ddots &0 & \ddots& \ddots&\ddots& 0\\
			-x_t& y_t & \vdots & \vdots &  & \ddots & \ddots & x_t & -y_t\\
			-x_t& -y_t & 0 & \ldots &\ldots &\ddots &0 & y_t & x_t\\
		\end{pmatrix},
	\end{equation}
and  make use of Brauer's theorem with a convenient chosen $n$-vector $z$ with one component equals to zero and  the remaining $n-1$ entries are equal to $\frac{1}{n-1}$ to obtain an $n\times n$ nonnegative matrix $A=B+ez^T$ with spectrum $((n-1)m;\lambda_2,\ldots,\lambda_n)$ and whose each row sum is $(n-1)m$. Their result can be summarized in the following.
 \begin{theorem}[{\cite[Theorem~3.3]{soto_20}}] If $\left(\lambda_2,\ldots, \lambda_n\right)$ is any list of complex numbers which is closed under complex conjugation with $\lambda_2\geq \ldots\geq \lambda_p   $  where $2\leq p\leq n$. Then there exists a least real number $\lambda_0$
with $\max\limits_{2\leq j\leq n}|\lambda_j|\leq\lambda_0\leq (n-1)\max\limits_{2\leq j\leq n}|\lambda_j|$
such that the list of complex numbers $( \lambda_1;\lambda_2,\ldots,\lambda_n)$ is realizable if and only if $\lambda_1\geq \lambda_0$.
\end{theorem}
\subsection{First case when  at least one of $\{ \lambda_2,...,\lambda_n\}$ is real }
Here we present the most general results of this kind using Brauer's theorem by starting
% Now to improve on the preceding results, we shall  use a similar proof which is based on Brauer's theorem  but with
  a different matrix $C$ and with a more convenient vector $z$ in the case when at least one of the eigenvalues is real. A particular case of our results here improves the upper bound  given in the preceding theorem.

   %first and then we shall next study the case when none of $\left(\lambda_2,\ldots,\lambda_n\right)$ is real.
   Indeed, suppose that $\lambda_2,\ldots,\lambda_p$ are real  and $\lambda_{p+1},\ldots,\lambda_n$ are non-real with their real parts and imaginary parts are respectively denoted by $x_i=\Re(\lambda_i)$ and  $y_i=\Im(\lambda_i)$ for $i=p+1,\ldots,\frac{n+p}{2}$, and let $C$ be the $n\times n$ matrix given by
 {\tiny   \begin{equation}\label{Eq::C}
		C=\begin{pmatrix}
			0& 0 & 0 & \ldots & \ldots &\ldots &\ldots &\ldots & 0\\
			-\lambda_2&\lambda_2 &\ddots & & & & & & \vdots\\
            \vdots& \ddots & \ddots & \ddots &  &  &  &  & \vdots\\
			-\lambda_p& 0 & \cdots & \lambda_p & 0 & & & & \vdots\\
			-x_{p+1}& y_{p+1} & \ddots & 0 &x_{p+1} &-y_{p+1} & & & \vdots\\
           -x_{p+1}& -y_{p+1} &0 & \ddots &y_{p+1} &x_{p+1} &\ddots & & \vdots\\
			\vdots&\vdots &\vdots &\ddots &0 & \ddots& \ddots&\ddots& 0\\
			-x_\frac{n+p}{2}& y_\frac{n+p}{2} & \vdots & \vdots &  & \ddots & \ddots & x_\frac{n+p}{2} & -y_\frac{n+p}{2}\\
			-x_\frac{n+p}{2}& -y_\frac{n+p}{2} & 0 & \ldots &\ldots &\ddots &0 & y_\frac{n+p}{2} & x_\frac{n+p}{2}\\
		\end{pmatrix},
	\end{equation}}
Now, consider the following 3 cases:\\
\underline{Case 1:} If $\Re\left(\lambda_i\right)\leq 0$ for all $i= 2, 3,\ldots, n,$  then all the entries in the first column of $C$ are nonnegative, therefore
by using Brauer's theorem with $z=(0,|w_2(C)|,\ldots,|w_n(C)|)^T$, we obtain a nonnegative matrix $A=C+ez^T$ with spectrum  $ (|w_2(C)|+\cdots+|w_n(C)|; \lambda_2,\ldots,\lambda_n)$ and such that each row sum of $A$ is $|w_2(C)|+\cdots+|w_n(C)|$.\\
\underline{Case 2:} If $\Re\left(\lambda_k\right)\ge 0$ for some $k\in S_1\bigcup S_2$ where $S_1=\{ 3,\ldots,p\}$ and $S_2= \{ p+1,\ldots,n\}$  then all the entries in the $k$-th column of $C$ are nonnegative if $k$ is in $S_1$ and all entries in either the $k$-th  or  the $(k +1)$-th column  of $C$ are nonnegative if $k$ is in $S_2$. So that making use of  Brauer's theorem with
 $z_1=(|w_1(C)|,\ldots, |w_{k-1}(C)|, 0, |w_{k+1}(C)|,\ldots,|w_n(C)|)^T$ if the $k$-th  column  of $C$ is nonnegative to obtain a nonnegative matrix $A=C+ez_1^T$ with spectrum  $ (|w_1(C)|+\cdots+|w_{k-1}(C)|+ |w_{k+1}(C)|+\cdots+|w_n(C)|; \lambda_2,\ldots,\lambda_n)$ and such that each row sum of $A$ is $|w_1(C)|+\cdots+|w_{k-1}(C)|+ |w_{k+1}(C)|+\cdots+|w_n(C)|$  and applying it with $z_2=(|w_1(C)|,\ldots, |w_{k}(C)|, 0, |w_{k+2}(C)|,\ldots,|w_n(C)|)^T$ if  the $(k +1)$-th column  of $C$ is nonnegative,
   to obtain a nonnegative matrix $A=C+ez_2^T$ with spectrum  $ (|w_1(C)|+\cdots+|w_{k}(C)|+ |w_{k+2}(C)|+\cdots+|w_n(C)|; \lambda_2,\ldots,\lambda_n)$ and such that each row sum of $A$ is $|w_1(C)|+\cdots+|w_{k}(C)|+ |w_{k+2}(C)|+\cdots+|w_n(C)|$.\\
\underline{Case 3:} If $\Re\left(\lambda_2\ge 0\right)$ and $\Re\left(\lambda_i\right)\leq 0$ for all $i= 3,\ldots, n,$   then we use the matrix $C'$ which is obtained from $C$ by exchanging all entries in the first and the second columns except the second entries in those two columns remain unchanged i.e. in $C'$  we leave  $-\lambda_2$ in the first column and $\lambda_2$  in the second column without changing. So that all the entries in the second column of $C'$ are nonnegative, and noticing that $w_1(C')=w_2(C)$, $w_i(C')=w_i(C)$ for all $i=3,\ldots,n$, then
by using Brauer's theorem with $z=(|w_2(C)|, 0, |w_3(C)|,\ldots,|w_n(C)|)^T$, we obtain a nonnegative matrix $A=C'+ez^T$ with spectrum  $ (|w_2(C)|+\cdots+|w_n(C)|; \lambda_2,\ldots,\lambda_n)$ whose each row sum $|w_2(C)|+\cdots+|w_n(C)|$.

Now a simple check shows that the quantity $w(C)$ which is defined by (2.1) is given by
 $$w(C)=\min\{\pm \lambda_2,\ldots,\pm \lambda_p,\pm x_{p+1}, \pm y_{p+1} ,\ldots, \pm x_\frac{n+p}{2}, \pm y_\frac{n+p}{2} \}.$$   With this in mind,  we  conclude the following particular case  which gives a refinement of the previous theorem.
\begin{theorem} If $\left(\lambda_2,\ldots, \lambda_n\right)$ is any list of complex numbers which is closed under complex conjugation
 with $\lambda_2\geq \ldots\geq \lambda_p   $  where $2\leq p\leq n$ and let $\lambda_{p+1},\ldots,\lambda_n$ be non-real with their real parts and imaginary parts are respectively denoted by $x_i=\Re(\lambda_i)$ and  $y_i=\Im(\lambda_i)$ for $i=p+1,\ldots,\frac{n+p}{2}$. Moreover, let $$w= \min\{\pm \lambda_2,\ldots,\pm \lambda_p,\pm x_{p+1}, \pm y_{p+1} ,\ldots, \pm x_\frac{n+p}{2}, \pm y_\frac{n+p}{2} \}.$$  Then, there exists a least real number $\lambda_0$
with $\max\limits_{2\leq j\leq n}|\lambda_j|\leq\lambda_0\leq (n-1)|w|$
such that the list of complex numbers $( \lambda_1;\lambda_2,\ldots,\lambda_n)$ is realizable if and only if $\lambda_1\geq \lambda_0$.
\end{theorem}

\subsection{Second case when  none of $\{ \lambda_2,...,\lambda_n\}$ is real }
 Here we deal with the case when all the entries in the list $\left(\lambda_2,\ldots, \lambda_n\right)$ are non-real. It is worthy to mention that this case has not been addressed before in the literature. Indeed, in this case $n$ has to be odd with $n=2s-1$ and we use the same analysis as above but with the $n\times n$ matrix $E$ given by
 \begin{equation}\label{Eq::E}
		E=\begin{pmatrix}
			0& 0 & 0 & \ldots & \ldots &\ldots &\ldots  & 0\\
			-x_2+y_2&x_2 & -y_2 &\ddots & & & &  \vdots\\
            -x_2-y_2&y_2 & x_2 & 0 &\ddots & & &  \vdots\\
			-x_{3}& y_{3} & 0  &x_{3} &-y_{3} & \ddots & &   \vdots\\
            -x_{3}& - y_{3} & 0 &y_{3} & x_{3} & 0 &\ddots &  \vdots\\
			\vdots&\vdots &\vdots  & 0 &0 & \ddots& \ddots& 0 \\
			-x_{s}& y_{s} & 0 & \vdots & \ddots  & \ddots &  x_s & -y_{s}\\
			-x_{s}& -y_{s} & 0 & \ldots &\ldots & 0 & y_{s} & x_{s}\\
		\end{pmatrix}
	\end{equation}
where $x_i=\Re(\lambda_i)$ and  $y_i=\Im(\lambda_i)$ for $i=2,\ldots,s$.
We argue as before except in this case we need only to  consider two cases with the matrix $E$ and some variant of it:\\
\underline{Case 1:} If $\Re\left(\lambda_i\right)\leq 0$ for all $i= 2,\ldots, s,$  then in the first column  either $y_2$ or $-y_2$ is non-positive. Without loss of generality, suppose that $y_2\leq 0$ then we first apply Brauer's theorem on $E$ with  $z=(-y_2,0,y_2,0,\ldots,0)^T$ to obtain  the matrix $E'$ given by
\begin{equation}\label{Eq::E'}
		E'=\begin{pmatrix}
			-y_2& 0 & y_2& \ldots & \ldots &\ldots &\ldots  & 0\\
			-x_2&x_2 & 0 &\ddots & & & &  \vdots\\
            -x_2-2y_2& y_2 & x_2+ y_2  &\ddots & & &  \vdots\\
			-x_{3}-y_2& y_{3} & y_2 &x_{3} &-y_{3} & \ddots & &   \vdots\\
            -x_{3}-y_2& - y_{3} & y_2 &y_{3} & x_{3} & 0 &\ddots &  \vdots\\
			\vdots&\vdots &\vdots  & 0 &0 & \ddots& \ddots& 0 \\
			-x_{s}-y_2& y_{s} & y_2 & \vdots & \ddots  & \ddots &  x_s & -y_{s}\\
			-x_{s}-y_2& -y_{s} & y_2 & \ldots &\ldots & 0 & y_{s} & x_{s}\\
		\end{pmatrix}
	\end{equation}
 whose each row sum is zero  and whose first column is nonnegative, therefore
by using Brauer's theorem with $z=(0,|w_2(E')|,\ldots,|w_n(E')|)^T$, we obtain a nonnegative matrix $A=E'+ez^T$ whose spectrum is given by $ \left (|w_2(E')|+\cdots+|w_n(E')|; \lambda_2,\ldots,\lambda_n\right)$ and such that each row sum of $A$ is $|w_2(E')|+\cdots+|w_n(E')|$.\\
\underline{Case 2:} If $x_k=\Re\left(\lambda_k\right)\ge 0$ for some $k=2,\ldots,s$   then all entries in either the $k$-th  or  the $(k +1)$-th column  of $E$ are nonnegative. Applying Brauer's theorem with
 $z_1=(|w_1(E)|,\ldots, |w_{k-1}(E)|, 0, |w_{k+1}(E)|,\ldots,|w_n(E)|)^T$ if the $k$-th  column  of $E$ is nonnegative
  to obtain a nonnegative matrix $A=E+ez_1^T$ whose  spectrum is given by  $$ \sigma(A)=\left(|w_1(E)|+\cdots+|w_{k-1}(E)|+ |w_{k+1}(E)|+\cdots+|w_n(E)|; \lambda_2,\ldots,\lambda_n\right)$$ and whose each row sum is $|w_1(E)|+\cdots+|w_{k-1}(E)|+ |w_{k+1}(E)|+\cdots+|w_n(E)|$  and applying it with $z_2=(|w_1(E)|,\ldots, |w_{k}(E)|, 0, |w_{k+2}(E)|,\ldots,|w_n(E)|)^T$ if  the $(k +1)$-th column  of $E$ is nonnegative,
   to obtain a nonnegative matrix $A=E+ez_2^T$ with spectrum given by  $$ \sigma(A)=\left (|w_1(E)|+\cdots+|w_{k}(E)|+ |w_{k+2}(E)|+\cdots+|w_n(E)|; \lambda_2,\ldots,\lambda_n\right )$$ and whose each row sum is $|w_1(E)|+\cdots+|w_{k}(E)|+ |w_{k+2}(E)|+\cdots+|w_n(E)|.$

Next, we study a particular case in the preceding discussion. In fact,  if $w= \min\{\pm x_{2}, \pm y_{2} ,\ldots, \pm x_{s}, \pm y_{s} \},$ then observe that in order to  obtain from $E'$ a nonnegative generalized stochastic matrix  via the use of Brauer's theorem, we need to add $2|w|$ to each entry of the third column while it is enough to add $|w|$ to each entry of the $i$-th column of $E'$ for $i=2,4,5,\ldots,n$ (as the first column is nonnegative). Similarly, for the matrix $E$, we need to add to each entry of the first column $2|w|$ and for another $n-2$ columns we only need to add to each entry of those $|w|$ to insure that we obtain a nonnegative  generalized stochastic matrix via the use of Brauer's theorem. Thus, we have the following.

\begin{theorem} If $\left(\lambda_2,\ldots, \lambda_n\right)$ with $n=2s-1$, is any list of complex numbers which is closed under complex conjugation
  and such that none of them is real and where their real parts and imaginary parts are respectively denoted by $x_i=\Re(\lambda_i)$ and  $y_i=\Im(\lambda_i)$ for all $i=2,\ldots,s$. Moreover, let $w= \min\{\pm x_{2}, \pm y_{2} ,\ldots, \pm x_{s}, \pm y_{s} \},$  Then, there exists a least real number $\lambda_0$
with $\max\limits_{2\leq j\leq n}|\lambda_j|\leq\lambda_0\leq n|w|$
such that the list of complex numbers $( \lambda_1;\lambda_2,\ldots,\lambda_n)$ is realizable if and only if $\lambda_1\geq \lambda_0$.
\end{theorem}

\section{Improving the upper bound for doubly stochastic realizations}
 Our main objective in this section, is to study the same problem for doubly stochastic realizations. Of course, one expects weaker results as more constraints are imposed here. In fact, we have the following theorem.
 \vspace{0.5cm}
 \begin{theorem} If $\left(\lambda_2,\ldots, \lambda_n\right)$ is any list of complex numbers which is closed under complex conjugation and let
  $$w= \min\{\pm \Re\left(\lambda_2\right),\ldots,\pm \Re\left(\lambda_n\right) \}.$$  Then, there exists a least real number $\lambda_0$
with $\max\limits_{2\leq j\leq n}|\lambda_j|\leq\lambda_0\leq (n+2)|w|$
such that the list of complex numbers $( \lambda_1;\lambda_2,\ldots,\lambda_n)$ is realizable by an $n\times n$ nonnegative generalized doubly stochastic matrix  if and only if $\lambda_1\geq \lambda_0$.
\end{theorem}

 \begin{proof} We split the proof into two cases.\\
 \underline{Case 1:} \\
 Suppose that $\lambda_2,\ldots,\lambda_p$ are real  and $\lambda_{p+1},\ldots,\lambda_n$ are non-real with their real parts and imaginary parts are respectively denoted by $x_i=\Re(\lambda_i)$ and  $y_i=\Im(\lambda_i)$ for $i=p+1,\ldots,\frac{n+p}{2}$. Moreover,
  denote the $m\times q$ zero matrix by      $\bf{0_{m\times q}}$. Also, we define the following matrices:

\begin{equation}
		\Lambda_{p}= \begin{pmatrix}
			0& 0&  \ldots & \ldots & 0 \\
			-\lambda_2&\lambda_2& 0 &\ldots & 0\\
            0 &-\lambda_3 & \lambda_3  & \ddots & \vdots\\
            \vdots & \ddots & \ddots & \ddots &  0 \\
			0 & \ldots & 0 &  -\lambda_p & \lambda_p \\
			
		\end{pmatrix},
	\end{equation} which is a $p\times p$ matrix,
{\tiny \begin{equation}
		Z= \begin{pmatrix}
			x_{p+1}& -y_{p+1} & 0 & 0 & 0 & 0 &\ldots & & & 0\\
           y_{p+1}& x_{p+1} &0 & 0 &\ddots &0 &\ldots & & & \vdots\\
			-x_{p+2}& y_{p+2} & x_{p+2} &-y_{p+2} &\ddots & \ddots & \ddots & & & \vdots\\
           -y_{p+2}& -x_{p+2} & y_{p+2} &x_{p+2} & 0 & 0 &\ddots &\ddots & & \vdots\\
           0& 0 & -x_{p+3} & y_{p+3} & x_{p+3} &-y_{p+3} & 0 & \ddots\\
           0& 0 & -y_{p+3} &-x_{p+3} &  y_{p+3} &x_{p+3} &0 &\ddots & & \vdots\\
			0&0  &0 & 0 &\ddots & \ddots& \ddots&\ddots& 0&0\\
            \vdots&\vdots &\vdots &\vdots &\ddots & \ddots& \ddots&\ddots& 0 &0\\
			0 & 0  & \vdots & \vdots & & & -x_\frac{n+p}{2} &  y_\frac{n+p}{2}\  & x_\frac{n+p}{2} & -y_\frac{n+p}{2}\\
			0& 0 & \ldots & \ldots &\ldots & 0 &-y_\frac{n+p}{2} & -x_\frac{n+p}{2} &  y_\frac{n+p}{2} & x_\frac{n+p}{2}\\
		\end{pmatrix},
	\end{equation}}  which is a $(n-p)\times (n-p)$ matrix, and
%\begin{equation}
%		Z= \begin{pmatrix}
%			Z_{p+1}& \bf{0_{2\times 2}} & \ldots & \bf{0_{2\times 2}} & \ldots  & \bf{0_{2\times 2}} \\
 %          -Z_{p+2}  &  Z_{p+2} & \bf{0_{2\times 2}} &  &   & \bf{0_{2\times 2}}\\
 %          \bf{0_{2\times 2}}& -Z_{p+3} & Z_{p+3} & \bf{0_{2\times 2}} &  & \bf{0_{2\times 2}} \\
 %          \bf{0_{2\times 2}} & \bf{0_{2\times 2}} &\ddots & \ddots &\ddots &\vdots\\
%			\vdots&\vdots &\ddots &\ddots  & \ddots & \bf{0_{2\times 2}} \\
%			\bf{0_{2\times 2}} & \bf{0_{2\times 2}} & \ldots & \bf{0_{2\times 2}} & -Z_\frac{n+p}{2} & Z_\frac{n+p}{2} \\
%		\end{pmatrix},
%	\end{equation} which is a $(n-p-1)\times (-p-1)$ matrix
 the $(n-p)\times p $ matrix given by \begin{equation}
		L= \begin{pmatrix}
			-x_{p+1}& y_{p+1} & 0  & \ldots & 0 \\
           -y_{p+1}& -x_{p+1} &0  &\ddots & \vdots\\
			0&0  &0  &\ddots & \vdots\\
            \vdots&\vdots &\vdots  & \ddots& \vdots\\
			0& 0 & \ldots  &\ldots & 0 \\
		\end{pmatrix},
	\end{equation}    and then we construct the $n\times n$ matrix given by
\begin{equation}\label{Eq::F}
		F= \begin{pmatrix}
			\Lambda_{p}& \bf{0_{p\times (n-p)}} \\
                 &   \\
            L &  Z\\
		\end{pmatrix}.
	\end{equation}
 Observe that $F$ has each row sum equals to zero and obviously its spectrum is $\sigma(F)=(0,\lambda_2,\ldots,\lambda_n)$. Next, we make use of Brauer's theorem on $F$ to obtain an $n\times n$ matrix $D$ whose each row and column sum is zero and which is co-spectral to $F$. First, recalling that   the $i$-th column sum  of $F$ is denoted by $s_i= s_i(F)$,  then  it is easy to check that all column sums of $F$ are given by:

  \begin{equation*}
\left\{\begin{aligned}
s_1&=-\lambda_2-x_{p+1}-y_{p+1} & \\
s_2&=\lambda_2-\lambda_3-x_{p+1}+y_{p+1} \\
s_3&=\lambda_3-\lambda_4 &\\
 \vdots  & \\
 s_{p-1}&=\lambda_{p-1}-\lambda_{p} \\
s_{p}&=\lambda_{p}\\
s_{p+1}&=x_{p+1}+y_{p+1}-x_{p+2}-y_{p+2}\\
s_{p+2}&=x_{p+1}-y_{p+1}-x_{p+2}+y_{p+2}\\
\vdots \\
s_{n-3}&=x_{\frac{n+p}{2}-1}+y_{\frac{n+p}{2}-1}-x_{\frac{n+p}{2}}-y_{\frac{n+p}{2}}\\
s_{n-2}&=x_{\frac{n+p}{2}-1}-y_{\frac{n+p}{2}-1}-x_{\frac{n+p}{2}}+y_{\frac{n+p}{2}}\\
s_{n-1}&=x_{\frac{n+p}{2}}+y_{\frac{n+p}{2}}\\
s_{n}&=x_{\frac{n+p}{2}}-y_{\frac{n+p}{2}}\\
\end{aligned}\right.
\end{equation*}
and that $s_1+s_2+\cdots+s_n=0$ as each row sum of $F$ is zero.
 Now applying Brauer's theorem on $F$ with  $z= \left(-\frac{s_1}{n},-\frac{s_2}{n},\ldots,-\frac{s_n}{n}\right)^T$, we obtain the matrix $D=F+ez^T$ whose each row and column sum is zero and such that $\sigma(D)=\sigma(F)$ since $z^Te=0$. Next, we examine in details the entries in each column of $D= (d_{ij})$.

 %if the $i$-th column of $D$ is denoted by $c_i(D)= c_i$, then the columns of $D$ are given by:
 %\scriptsize{ \begin{equation*}
%\left\{\begin{aligned}
%c_1 \ \  &=\left(-\frac{s_1}{n},\frac{-(n-1)\lambda_2+x_{p+1}+y_{p+1}}{n},-\frac{s_1}{n},\ldots,-\frac{s_1}{n},\frac{-(n-1)\lambda_2+x_{p+1}+y_{p+1}}{n}, %\frac{-(n-1)\lambda_2+x_{p+1}+y_{p+1}}{n}, -\frac{s_1}{n},\ldots,-\frac{s_1}{n} \right)^T  \ \ \  \ \\
%c_2 \  %&=\left(-\frac{s_2}{n},\frac{-(n-1)\lambda_2+x_{p+1}+y_{p+1}}{n},\frac{-(n-1)\lambda_2+x_{p+1}+y_{p+1}}{n},-\frac{s_1}{n},\ldots,-\frac{s_1}{n},\frac{-(n-1)\lambda_2+x_{p+1}+y_{p+1}}{n}, %\frac{-(n-1)\lambda_2+x_{p+1}+y_{p+1}}{n}, -\frac{s_1}{n},\ldots,-\frac{s_1}{n} \right)^T  \ \ \ \ \ \ \ \ \ \ \  \ \\
%c_2&=(\lambda_2-\lambda_3-x_{p+1}-y_{p+1} )^T\\
%c_3&=(\lambda_3-\lambda_4 )^T\\
% \vdots  &\\
% c_{p-1}&=(\lambda_{p-1}-\lambda_{p} )^T\\
% c_{p}&=(\lambda_{p})^T\\
%c_{p+1}&=(x_{p+1}+y_{p+1}-x_{p+2}-y_{p+2})^T\\
%c_{p+2}&=(x_{p+1}+y_{p+1}-x_{p+2}-y_{p+2})^T\\
%\vdots \\
%c_{n-3}&=(x_{\frac{n+p}{2}-1}+y_{\frac{n+p}{2}-1}-x_{\frac{n+p}{2}}-y_{\frac{n+p}{2}})^T\\
%c_{n-2}&=(x_{\frac{n+p}{2}-1}-y_{\frac{n+p}{2}-1}-x_{\frac{n+p}{2}}+y_{\frac{n+p}{2}})^T\\
%c_{n-1}&=(x_{\frac{n+p}{2}}+y_{\frac{n+p}{2}})^T\\
%c_{n}&=(x_{\frac{n+p}{2}}-y_{\frac{n+p}{2}})^T\\
%\end{aligned}\right.
%\end{equation*}}
\begin{itemize}
\item $d_{21} =\frac{-(n-1)\lambda_2+x_{p+1}+y_{p+1}}{n}$, \ $d_{p+1 \ 1}=\frac{\lambda_2-(n-1)x_{p+1}+y_{p+1}}{n},$\\ $d_{p+2 \ 1}=\frac{\lambda_2+x_{p+1}-(n-1)y_{p+1}}{n}$,  and $d_{i1}=-\frac{s_1}{n}$  for all $ i\notin\{ 2,p+1,p+2\}$.

\item $d_{22}=\frac{(n-1)\lambda_2+\lambda_3+x_{p+1}-y_{p+1}}{n}$, \ $d_{32}=\frac{-\lambda_2-(n-1)\lambda_3+x_{p+1}-y_{p+1}}{n}$, \\
 $d_{p+1 \ 2}=\frac{-\lambda_2+\lambda_3+x_{p+1}+(n-1)y_{p+1}}{n},$
$d_{p+2 \ 2}=\frac{-\lambda_2+\lambda_3-(n-1)x_{p+1}-y_{p+1}}{n}$ and $d_{i2}=-\frac{s_2}{n}$  for all  $i\notin\{ 2,3, p+1,p+2\}$.
\item $d_{33}=\frac{(n-1)\lambda_3+\lambda_4}{n}$, $d_{43}=\frac{-\lambda_3-(n-1)\lambda_4}{n}$, and $d_{i3}=-\frac{s_3}{n}$
 for all  $i\notin\{ 3, 4\}$. \\   \ \ \ \vdots
% \item       Similarly for  $k=4,\ldots, p-1$,
 \item $d_{p-1 \ p-1}=\frac{(n-1)\lambda_{p-1}+\lambda_p}{n}$, $d_{p \ p-1}=\frac{-\lambda_{p-1}-(n-1)\lambda_p}{n}$, and $d_{i \ p-1}=-\frac{s_{p-1}}{n}$
 for all  $i\notin\{ p-1, p\}$.
 \item $d_{p p}=\frac{(n-1)\lambda_{p}}{n}$,  and $d_{i p}=-\frac{s_{p}}{n}$
 for all  $i\neq p$.
 \item $d_{p+1 \ p+1} =\frac{(n-1)x_{p+1}-y_{p+1}+x_{p+2}+y_{p+2}}{n}$,\\ $d_{p+2 \ p+1}=\frac{-x_{p+1}+(n-1)y_{p+1}+x_{p+2}+y_{p+2}}{n},$ \\                                       $d_{p+3 \ p+1} =\frac{-x_{p+1}-y_{p+1}-(n-1)x_{p+2}+y_{p+2}}{n},$ \\ $d_{p+4 \ p+1} =\frac{-x_{p+1}-y_{p+1}+x_{p+2}-(n-1)y_{p+2}}{n},$   and  \\                                   $d_{i \ p+1}=-\frac{s_{p+1}}{n}$  for all $ i\notin\{ p+1, p+2, p+3, p+4\}$.

     \item $d_{p+1 \ p+2} =\frac{-x_{p+1}-(n-1)y_{p+1}+x_{p+2}-y_{p+2}}{n},$ \\ $d_{p+2 \ p+2}=\frac{(n-1)x_{p+1}+y_{p+1}+x_{p+2}-y_{p+2}}{n},$ \\                             $d_{p+3 \ p+2} =\frac{-x_{p+1}+y_{p+1}+x_{p+2}+(n-1)y_{p+2}}{n},$ \\ $d_{p+4 \ p+1} =\frac{-x_{p+1}+y_{p+1}-(n-1)x_{p+2}-y_{p+2}}{n},$  and  \\                       $d_{i \ p+2}=-\frac{s_{p+2}}{n}$  for all $ i\notin\{ p+1, p+2, p+3, p+4\}$. \\  \ \ \ \vdots

 \item $d_{n-3 \ n-3} =\frac{(n-1)x_{\frac{n+p}{2}-1}-y_{\frac{n+p}{2}-1}+x_{\frac{n+p}{2}}+y_{\frac{n+p}{2}}}{n},$ \\
 $d_{n-2 \ n-3}=\frac{-x_{\frac{n+p}{2}-1}+(n-1)y_{\frac{n+p}{2}-1}+x_{\frac{n+p}{2}}+y_{\frac{n+p}{2}}}{n},$ \\                                                                   $d_{n-1\ n-3} =\frac{-x_{\frac{n+p}{2}-1}-y_{\frac{n+p}{2}-1}-(n-1)x_{\frac{n+p}{2}}+y_{\frac{n+p}{2}}}{n},$ \\                                                               $d_{n \ n-3} =\frac{-x_{\frac{n+p}{2}-1}-y_{\frac{n+p}{2}-1}+x_{\frac{n+p}{2}}-(n-1)y_{\frac{n+p}{2}}}{n},$   and  \\                                                                $d_{i \ n-3}=-\frac{s_{n-3}}{n}$  for all $ i\notin\{ n-3, n-2, n-1, n\}$.

 \item $d_{n-3 \ n-2} =\frac{-x_{\frac{n+p}{2}-1}-(n-1)y_{\frac{n+p}{2}-1}+x_{\frac{n+p}{2}}-y_{\frac{n+p}{2}}}{n},$ \\                                                            $d_{n-2 \ n-2}=\frac{(n-1)x_{\frac{n+p}{2}-1}+y_{\frac{n+p}{2}-1}+x_{\frac{n+p}{2}}-y_{\frac{n+p}{2}}}{n},$ \\                                                             $d_{n-1 \ n-2} =\frac{-x_{\frac{n+p}{2}-1}+y_{\frac{n+p}{2}-1}+x_{\frac{n+p}{2}}+(n-1)y_{\frac{n+p}{2}}}{n},$ \\                                                                              $d_{n \ n-2} =\frac{-x_{\frac{n+p}{2}-1}+y_{\frac{n+p}{2}-1}-(n-1)x_{\frac{n+p}{2}}-y_{\frac{n+p}{2}}}{n},$   and  \\                                                                                $d_{i \ n-2}=-\frac{s_{n-2}}{n}$  for all $ i\notin\{ n-3, n-2, n-1, n\}$.

\item  $d_{n-1 \ n-1} =\frac{(n-1)x_{\frac{n+p}{2}}-y_{\frac{n+p}{2}}}{n},$ \                                                                                                   $d_{n \ n-1} =\frac{-x_{\frac{n+p}{2}}+(n-1)y_{\frac{n+p}{2}}}{n},$ \\  and                                                                                                $d_{i \ n-1}=-\frac{s_{n-1}}{n}$  for all $ i\notin\{ n-1, n\}$.

\item  $d_{n-1 \ n} =\frac{-x_{\frac{n+p}{2}}-(n-1)y_{\frac{n+p}{2}}}{n},$ \                                                                                                   $d_{n n} =\frac{(n-1)x_{\frac{n+p}{2}}+y_{\frac{n+p}{2}}}{n},$ \\  and                                                                                                     $d_{i n}=-\frac{s_{n}}{n}$  for all $ i\notin\{ n-1, n\}$.
\end{itemize} As before, if we let $w= \min\{\pm \lambda_2,\ldots,\pm \lambda_p,\pm x_{p+1}, \pm y_{p+1} ,\ldots, \pm x_\frac{n+p}{2}, \pm y_\frac{n+p}{2} \},$ then a simple check on the entries of $D$, shows  that in order to  obtain from $D$ a nonnegative generalized doubly stochastic matrix  via the use of Brauer's theorem, we need to add to each entry of $D$,
$\frac{(n-1) |w|+|w|+|w|+|w|}{n}=\frac{(n+2) |w|}{n}.$ In other words, we apply Brauer's theorem on $D$ with
 $z=\left(\frac{(n+2) |w|}{n},\ldots, \frac{(n+2) |w|}{n}\right)^T$ to obtain a nonnegative generalized doubly stochastic matrix $M=D+ez^T$ with spectrum
  $((n+2) |w|; \lambda_2,\ldots,\lambda_n ).$\\
\underline{Case 2:} \\
 Suppose now that all entries in the list $\left(\lambda_2,\ldots, \lambda_n\right)$ are  non-real.  In this case $n$ has to be odd with $n=2s-1$ and for our purposes, we consider the  following  $n\times n$ matrix $F'$ given by:
\begin{equation}\label{Eq::F'}
		F'= \begin{pmatrix}
            0 &0 & 0 &\ldots &0 &\ldots & & & 0\\
			-x_{2}+y_{2} & x_{2} &-y_{2} &\ddots & \ddots & \ddots & & & \vdots\\
           -x_{2}-y_{2} & y_{2} &x_{2} & 0 & 0 &\ddots &\ddots & & \vdots\\
            0 & -x_{3} & y_{3} & x_{3} &-y_{3} & 0 & \ddots\\
           0 & -y_{3} &-x_{3} &  y_{3} &x_{3} &0 &\ddots & & \vdots\\
			0  &0 & 0 &\ddots & \ddots& \ddots&\ddots& 0&0\\
            \vdots &\vdots &\vdots &\ddots & \ddots& \ddots&\ddots& 0 &0\\
			 \vdots  & \vdots & \vdots & & & -x_s &  y_s  & x_s & -y_s\\
			0 & \ldots & \ldots &\ldots & \ \ \ 0 &-y_s & -x_s &  y_s & x_s\\
		\end{pmatrix}.
	\end{equation}
Now for $i=1,\ldots,n$, it is straightforward  to find the $i$-th column sum  $s_i(F')$ of $F'$. Indeed,  a quick look shows that all column sums of $F'$ are given by:

  \begin{equation*}
\left\{\begin{aligned}
s_1(F')&=-2x_{2} \\
s_2(F')&=x_{2}+y_{2}-x_{3}-y_{3} \\
s_3(F')&=x_{2}-y_{2}-x_{3}+y_{3} \\
 \vdots  & \\
s_{n-3}(F')&=x_{s-1}+y_{s-1}-x_{s}-y_{s} \\
s_{n-2}(F')&=x_{s-1}-y_{s-1}-x_{s}+y_{s} \\
s_{n-1}(F')&=x_{s}+y_{s}\\
s_{n}(F')&=x_{s}-y_{s}\\
\end{aligned}\right.
\end{equation*}
Similar to what we have done earlier, we first use Brauer's theorem on $F'$ with $z= \left(-\frac{s_1(F')}{n},-\frac{s_2(F')}{n},\ldots,-\frac{s_n(F')}{n}\right)^T$ to obtain the matrix $Q=F'+ez^T$ whose each row and column sum is zero and such that $\sigma(Q)=\sigma(F')$ since $z^Te=0$. By examining in details the entries in each column of $Q$, we see that  as in the previous case, we need to add $\frac{(n+2) |w|}{n}$  to each entry of $Q$ to obtain a nonnegative matrix where in this case $w= \min\{\pm x_{2}, \pm y_{2} ,\ldots, \pm x_{s}, \pm y_{s} \}$. In other words, we apply Brauer's theorem on $Q$ with $z=\left(\frac{(n+2) |w|}{n},\ldots, \frac{(n+2) |w|}{n}\right)^T$ to obtain a nonnegative generalized doubly stochastic matrix $N=Q+ez^T$ with spectrum $((n+2) |w|; \lambda_2,\ldots,\lambda_n ).$
\end{proof}
\subsection{ Particular cases for improving the upper bound for the doubly stochastic realization }
Our  objective in this subsection is to improve the upper bound $(n+2) |w|$ found in the preceding theorem in some particular cases. First, it is worthy to mention that obtaining this bound is mainly credited to the fact that every column in the matrices $F$ and $F'$ used above has at most 4 nonzero entries. So in order to improve this bound through the use of our method, we need to start with a matrix $T$ that has the same properties provided that each column has at most 3 or less nonzero entries. Now we study 3 particular situations where this happens.

 Suppose that in the $n\times n$ matrix $F$ above defined by \begin{equation}
		F= \begin{pmatrix}
			\Lambda_{p}& \bf{0_{p\times (n-p)}} \\
                 &   \\
            L &  Z\\
		\end{pmatrix},
	\end{equation}  the number $p-1$ of real eigenvalues  which are exhibited in $\Lambda$ is bigger than $n-p-2$. Then, we can easily replace the matrix $F$ with another matrix $T$ whose each row sum is zero and with $\sigma(F)=\sigma(T)$ and such that each column of $T$ has at most 3 nonzero entries. Indeed, Let \begin{equation}
		T= \begin{pmatrix}
			\Lambda_{p}& \bf{0_{p\times (n-p)}} \\
                 &   \\
            L' &  Z'\\
		\end{pmatrix}.
	\end{equation}
where $L'$  and $Z'$ are obtained from $L$ and $Z$ by exchanging some entries  in such a way that each column of $T$ has at most 3 possible nonzero entries. More explicitly, in  $L$ we first exchange  the $(2,2)$-entry with the $(2,p)$-entry and then we move the last possible nonzero entry in each  of the first  $(n-p-2)$ columns of $Z$ to a convenient position in $L$ while keeping it in the same row (note that the last two columns in $Z$ have only two possible nonzero entries). Indeed,  $Z'$ and $L'$ are given by:

  {\tiny \begin{equation}
		Z'= \begin{pmatrix}
			x_{p+1}& -y_{p+1} & 0 & 0 & 0 & 0 &\ldots & & & 0\\
           y_{p+1}& x_{p+1} &0 & 0 &\ddots &0 &\ldots & & & \vdots\\
			-x_{p+2}& y_{p+2} & x_{p+2} &-y_{p+2} &\ddots & \ddots & \ddots & & & \vdots\\
           0& 0 & y_{p+2} &x_{p+2} & 0 & 0 &\ddots &\ddots & & \vdots\\
           0& 0 & -x_{p+3} & y_{p+3} & x_{p+3} &-y_{p+3} & 0 & \ddots\\
           0& 0 & 0 & 0 &  y_{p+3} &x_{p+3} &0 &\ddots & & \vdots\\
			0&0  &0 & 0 &\ddots & \ddots& \ddots&\ddots& 0&0\\
            \vdots&\vdots &\vdots &\vdots &\ddots & \ddots& \ddots&\ddots& 0 &0\\
			0 & 0  & \vdots & \vdots & & & -x_\frac{n+p}{2} &  y_\frac{n+p}{2}\  & x_\frac{n+p}{2} & -y_\frac{n+p}{2}\\
\\
			0& 0 & \ldots & \ldots &\ldots & 0 & 0 & 0 &  y_\frac{n+p}{2} & x_\frac{n+p}{2}\\
		\end{pmatrix},\ \ \ \ \ \ \ \ \ \  \ \ \
	\end{equation}}   which is a $(n-p)\times (n-p)$ matrix, and

{\scriptsize  \begin{equation}
		L'= \begin{pmatrix}
			-x_{p+1}& y_{p+1} & 0 &\ldots & \ldots &\ldots & \ldots &\ldots &0&0\\
           -y_{p+1}& 0 &0 &0  &\ddots & 0 &\ldots & \ldots &0 &-x_{p+1}\\
           \\
           0& 0& -x_{p+2}& -y_{p+2} & 0 &  0 &\ldots &\dots &\dots &0 \\
           0& 0& 0& 0 & -y_{p+3} &-x_{p+3} &  0 &\ldots &\ldots &\vdots \\
			0&0  &0  &\ddots & \ddots & \ddots& \ddots &\ddots &\ddots &\vdots\\
            \vdots&\vdots &\ddots & \ddots & \ddots& \ddots& \ddots& \ddots& \ddots&  \vdots\\
          0 & 0  & \vdots & \vdots & 0& 0& -x_{\frac{n+p}{2}-1} &  -y_{\frac{n+p}{2}-1}\  & 0 & 0\\
          \\
          0& 0 & \ldots & \ldots &0 & 0 &0&0 &-y_\frac{n+p}{2} & -x_\frac{n+p}{2} \\
		\end{pmatrix}, \ \ \ \ \ \ \ \ \
	\end{equation}}  which is an $(n-p)\times p $ matrix.  As mentioned earlier, this construction of $T$ insures that each column has at most 3 possible nonzero
entries. As it was applied on $F$ twice, similar applications of  Brauer's theorem on $T$  where for the second application of this theorem we choose
$z=\left(\frac{(n+1) |w|}{n},\ldots, \frac{(n+1) |w|}{n}\right)^T$. Thus, we conclude the following.
\begin{theorem} If $\left(\lambda_2,\ldots, \lambda_n\right)$ is any list of complex numbers which is closed under complex conjugation
 with $\lambda_2\geq \cdots\geq \lambda_p   $  where $n-p-1\le p\leq n$ and let $\lambda_{p+1},\ldots,\lambda_n$ be non-real with their real parts and imaginary parts are respectively denoted by $x_i=\Re(\lambda_i)$ and  $y_i=\Im(\lambda_i)$ for $i=p+1,\ldots,\frac{n+p}{2}$. Moreover, let
  $$w= \min\{\pm \lambda_2,\ldots,\pm \lambda_p,\pm x_{p+1}, \pm y_{p+1} ,\ldots, \pm x_\frac{n+p}{2}, \pm y_\frac{n+p}{2} \}.$$  Then, there exists a least real number $\lambda_0$
with $\max\limits_{2\leq j\leq n}|\lambda_j|\leq\lambda_0\leq (n+1)|w|$
such that the list of complex numbers $( \lambda_1;\lambda_2,\ldots,\lambda_n)$ is realizable by an $n\times n$ nonnegative generalized doubly stochastic matrix  if and only if $\lambda_1\geq \lambda_0$.
\end{theorem}

The second particular situation that we are interested  in, is  when all $\left(\lambda_2,\ldots, \lambda_n\right)$  are either real or pure imaginary. For the first case, we shall use the matrix $\Lambda_n$ where obviously each column has at most 2 possible nonzero entries. For the second case, we deal with the matrix $F'$ whose each column also has at most 2 possible nonzero entries as all real parts are zeroes. Repeating the same process as earlier, the bound is now improved to
 $n|w|$. As a conclusion, we have the following result.
\begin{theorem} If $\left(\lambda_2,\ldots, \lambda_n\right)$ is any list of complex numbers which is closed under complex conjugation
 such that all of them are either real or all of them are pure imaginary and let
  $$w= \min\{\pm \Re(\lambda_2),\pm \Im(\lambda_2),\ldots,\pm \Re(\lambda_n),\pm \Im(\lambda_n) \}.$$  Then, there exists a least real number $\lambda_0$
with $\max\limits_{2\leq j\leq n}|\lambda_j|\leq\lambda_0\leq n|w|$
such that the list of complex numbers $(\lambda_1;\lambda_2,\ldots,\lambda_n)$ is realizable by an $n\times n$ nonnegative generalized doubly stochastic matrix  if and only if $\lambda_1\geq \lambda_0$.
\end{theorem}

 We conclude this section by noting that it is possible to lower the bound more by imposing more restrictions on $\left(\lambda_2,\ldots, \lambda_n\right)$. Indeed, assume that all of them are \emph{non-positive} and further without loss of generality suppose  that $0\geq \lambda_2\geq\cdots\geq \lambda_n$, then  we consider the following $n\times n$ matrix $\Gamma$ whose each row sum is zero and which is given by:
 \begin{equation}
		\Gamma=\begin{pmatrix}
			0& 0&  \ldots & \ldots & 0 \\
			-\lambda_2&\lambda_2& 0 &\ldots & 0\\
            -\lambda_3& 0 & \lambda_3  & \ddots & \vdots\\
            \vdots & \vdots & \ddots & \ddots &  0 \\
			-\lambda_n & 0& \ldots &   0 & \lambda_n \\
			
		\end{pmatrix},
	\end{equation}
 Repeating  again the same process of applying Brauer's theorem this time on the matrix  $\Gamma$ with the chosen vector
 $z= \left(-\frac{s_1(\Gamma)}{n},-\frac{s_2(\Gamma)}{n},\ldots,-\frac{s_n(\Gamma)}{n}\right)^T$, we obtain the matrix $\Theta=\Gamma +ez^T$ which is given by:

 \begin{equation}
		\Theta=\begin{pmatrix}
\frac{\lambda_2+\cdots+\lambda_n}{n}& \frac{-\lambda_2}{n} \ \ \ & \frac{-\lambda_3}{n} \ \ \ & \ldots & \ldots & \frac{-\lambda_n}{n}\\
-\frac{(n-1)}{n}\lambda_2+\frac{\lambda_3+\cdots+\lambda_n}{n}&\frac{(n-1)\lambda_2}{n}& \frac{-\lambda_3}{n} \ \ \ & \vdots &\ldots & \vdots\\
 -\frac{(n-1)}{n}\lambda_3+\frac{\lambda_2+\lambda_4+\cdots+\lambda_n}{n}& \frac{-\lambda_2}{n} \ \ \ &\frac{(n-1)\lambda_3}{n} & \ \ \ \ \  \ \ddots & & \vdots\\
 \vdots & \vdots & \vdots & \ \ \ \ \ \ \ddots & & \frac{-\lambda_n}{n} \ \ \  \\
-\frac{(n-1)}{n}\lambda_n+\frac{\lambda_2+\cdots+\lambda_{n-1}}{n}& \frac{-\lambda_2}{n} \ \ \ &\frac{-\lambda_3}{n} \ \ \ &  &   \ldots & \frac{(n-1)\lambda_n}{n} \ \ \\
\end{pmatrix},
\end{equation}
 whose each row and column sum is zero. A simple check shows that the least negative entry in $\Theta$ is $\frac{(n-1)\lambda_n}{n} $ so that when applying Brauer's theorem on $\Theta$ with $z= \left(\frac{(n-1)|\lambda_n|}{n},\cdots,\frac{(n-1)|\lambda_n|}{n}\right)^T$, we obtain a doubly stochastic realization for $\left(\lambda_2,\ldots, \lambda_n\right)$ given by $\Theta +ez^T$ whose spectrum is given by  $\sigma(\Theta +ez^T)=\left((n-1)|\lambda_n|; \lambda_2,\ldots, \lambda_n\right).$ Thus, we have the following conclusion.
 \begin{theorem} Let $\left(\lambda_2,\ldots, \lambda_n\right)$ be any list of non-positive numbers such that $0\geq \lambda_2\geq\cdots\geq \lambda_n$.
   Then, there exists a least real number $\lambda_0$ with $\max\limits_{2\leq j\leq n}|\lambda_j|\leq\lambda_0\leq (n-1)|\lambda_n|$
such that the list of complex numbers $(\lambda_1;\lambda_2,\ldots,\lambda_n)$ is realizable by an $n\times n$ nonnegative generalized doubly stochastic matrix  if and only if $\lambda_1\geq \lambda_0$.
\end{theorem}
 %Since the second column of $F$ has at most 4 nonzero entries and the $p$-th column of $F$ has at most 1 nonzero entries, we exchange the $(p+2, 2)$-entry of the second column, which is $-x_{p+1}$ with
 %the zero in the same row but in the $p$-th column. So that the first 2 columns of $F$

%More explicitly, we exhibit the entries of each column
%From Theorem, one can deduce some sufficient conditions for .Moreover, we can obtain various results from any given
%\section*{References}
\section*{Acknowledgements}
%The authors sincerely thank R.L. Soto and A.I. Julio  for their valuable comments and useful suggestions on an earlier manuscript.

The authors sincerely thank the reviewers for many helpful comments and useful suggestions. Also, many thanks go  the  handling editor  and the editor-in-chief  for their valuable suggestions.
\section*{Declaration of Competing Interest}
The authors declare that they have no competing interests.

\end{document}